\documentclass[11pt,reqno]{amsart}

\usepackage{amsmath,amssymb,amsthm,mathtools}
\usepackage[margin=1in]{geometry}
\usepackage{microtype}
\usepackage{enumitem}
\usepackage[hidelinks]{hyperref}

\newtheorem{theorem}{Theorem}[section]
\newtheorem{lemma}[theorem]{Lemma}
\newtheorem{proposition}[theorem]{Proposition}

\theoremstyle{definition}

\numberwithin{equation}{section}

\newcommand{\rhostar}{\rho^{*}}
\newcommand{\sigmastar}{\sigma^{*}}

\title[A Huppert-type $\rho^*$--$\sigma^*$ theorem]{A Huppert-type $\rho^*$--$\sigma^*$ theorem for sub-class sizes}
\author{Yong Yang}
\address{Department of Mathematics, Texas State University, 601 University Drive, San Marcos, TX 78666, USA}
\email{yang@txstate.edu}
\date{}
\subjclass[2020]{20E45, 20D10}
\keywords{finite group, sub-class size, $p$-nilpotent group, Huppert $\rho$--$\sigma$ problem}

\begin{document}

\begin{abstract}
For a finite group $G$, let $\rho^{*}(G)$ be the set of primes dividing some sub-class size of $G$, and let $\sigma^{*}(G)$ be the maximum number of distinct prime divisors of a sub-class size. Motivated by the conjugacy-class version of Huppert's $\rho$--$\sigma$ problem, we prove that
\[
 |\rho^{*}(G)|<2\sigma^{*}(G)
\]
for every finite nonnilpotent group $G$. This answers \cite[Remark~5.3]{QY} and extends to arbitrary finite nonnilpotent groups the coefficient-$2$ bound proved in \cite[Theorem~1.7]{QY} for solvable nonnilpotent groups. The coefficient $2$ is best possible. The proof does not use the classification of finite simple groups.
\end{abstract}

\maketitle

\section{Introduction}

Huppert's $\rho$--$\sigma$ problem \cite{Huppert} asks whether the total number of primes occurring in conjugacy class sizes is at most twice the largest number occurring in a single class size. The coefficient $2$ is not valid even for solvable groups; see \cite{CD} and the discussion in \cite[Section~1]{QY}.

Let $G$ be a finite group and $x\in G$. Denote by $G_x$ the smallest subnormal subgroup of $G$ containing $x$. Following \cite{QY}, the \emph{sub-class} of $x$ in $G$ is
\[
 x^{G^{*}}:=x^{G_x},
\]
and its size $|x^{G^{*}}|$ is called a \emph{sub-class size}. Set
\[
 \rhostar(G)=\bigcup_{x\in G}\pi\bigl(|x^{G^{*}}|\bigr),
 \qquad
 \sigmastar(G)=\max_{x\in G}\bigl|\pi\bigl(|x^{G^{*}}|\bigr)\bigr|,
\]
where $\pi(n)$ denotes the set of prime divisors of the positive integer $n$.

For sub-class sizes, a stronger bound holds. It was proved in \cite[Theorem~1.7]{QY} that if $G$ is solvable and nonnilpotent, then
\[
 |\rhostar(G)|<2\sigmastar(G).
\]
Thus, for solvable groups, sub-class sizes satisfy the coefficient-$2$ bound that fails for ordinary conjugacy class sizes. It was asked in \cite[Remark~5.3]{QY} whether a corresponding result holds for arbitrary groups. We show that the same strict inequality holds for every finite nonnilpotent group. Using the family in \cite[Example~1]{CD}, we also show that the coefficient $2$ is best possible. No classification of finite simple groups is needed. For nilpotent groups all sub-class sizes are $1$ \cite[Corollary~2.3]{QY}, so $\rhostar(G)=\varnothing$ and $\sigmastar(G)=0$.

\begin{theorem}\label{thm:main}
Let $G$ be a finite nonnilpotent group. Then
\[
 |\rhostar(G)|<2\sigmastar(G).
\]
\end{theorem}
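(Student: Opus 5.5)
The plan is to reduce the inequality to a covering statement: find two elements $x,y\in G$ with
\[
 \rhostar(G)\subseteq \pi\bigl(|x^{G^{*}}|\bigr)\cup\pi\bigl(|y^{G^{*}}|\bigr),
\]
and then show that the union cannot have size $2\sigmastar(G)$, i.e.\ the two sets overlap or one of them falls short of $\sigmastar(G)$. This would give $|\rhostar(G)|\le 2\sigmastar(G)$ with strict inequality. Since the abstract promises a CFSG-free proof, I will avoid any analysis of simple sections. The argument will rest only on the elementary properties of $G_x$ from \cite{QY}, on Wielandt's subnormality theory, and on Frobenius' $p$-nilpotency criterion.

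\textbf{Step 1: a local description of $\rhostar(G)$.} I first want to show that every prime in $\rhostar(G)$ is detected by an element of prime-power order. Write $x=x_qx_{q'}$ as a product of commuting powers. Then $G_{x_q}\le G_x$, since $x_q$ is a power of $x$ and $G_x$ is a subnormal subgroup containing $x$. I would try to show that $|x^{G^{*}}|$ is divisible by $|x_q^{G^{*}}|$ and by $|x_{q'}^{G^{*}}|$, in analogy with $C_G(x)=C_G(x_q)\cap C_G(x_{q'})$; the hope is
\[
 \pi\bigl(|x^{G^{*}}|\bigr)=\pi\bigl(|x_q^{G^{*}}|\bigr)\cup\pi\bigl(|x_{q'}^{G^{*}}|\bigr).
\]
Next, for a $q$-element $u$, I want to show that $q\in\pi(|u^{G^{*}}|)$ or $r\in\pi(|u^{G^{*}}|)$ exactly when $u$ fails to centralize a suitable Sylow subgroup of $G_u$. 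The reason is that $G_u=\langle u^{G_u}\rangle$ is the subnormal closure, and it is nilpotent precisely when $u^{G_u}=\{u\}$. Combining this with \cite[Corollary~2.3]{QY}, I expect a prime $r$ to lie in $\rhostar(G)$ only if there is a prime-power-order element whose subnormal closure is non-nilpotent and involves $r$.

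\textbf{Step 2: choosing two witnesses.} Fix a prime $p\in\rhostar(G)$ and a $p$-element $x$ with $p\mid |x^{G^{*}}|$ or with $|x^{G^{*}}|$ carrying many primes, chosen to maximize $|\pi(|x^{G^{*}}|)|$ among prime-power-order elements. I will then take a $p'$-element $y$ that commutes with $x$, for instance inside $C_G(x)$ or in a Hall-type subgroup of $G_x$, chosen to pick up the primes of $\rhostar(G)$ not already in $\pi(|x^{G^{*}}|)$. By Step 1, $xy$ has $\pi(|(xy)^{G^{*}}|)\supseteq \pi(|x^{G^{*}}|)\cup\pi(|y^{G^{*}}|)$, and so this union has size at most $\sigmastar(G)$. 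To cover all of $\rhostar(G)$ by two such sets I would argue by induction on $|G|$. The induction passes to $G_x$ and to the normal subgroup $O^{p}(G)$ or $O^{p'}(G)$; here I need that sub-class sizes in a subnormal subgroup $H$ are sub-class sizes in $G$, because $H_x=G_x$ for $x\in H$ when $H\trianglelefteq\trianglelefteq G$. When $G$ is $p$-nilpotent for every $p$ we are in the excluded nilpotent case. Otherwise I pick $p$ with $G$ not $p$-nilpotent. By Frobenius, some $p$-subgroup then has $N_G(P_0)/C_G(P_0)$ not a $p$-group, which yields a Schmidt-type configuration: a $p$-element acting nontrivially on an $r$-group, or vice versa.

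\textbf{Step 3: strictness.} This is the main obstacle. To get strict inequality I need the two witnessing sets to overlap, or one of them to be small. My first approach uses the Schmidt configuration: a prime $r$ and an $r'$-element acting nontrivially on an $r$-subgroup inside some $G_x$. In that configuration the same prime divides both the sub-class size of the acting element and that of an $r$-element on which it acts, so some prime is counted twice and $|\rhostar(G)|\le 2\sigmastar(G)-1$. The delicate point is to ensure the overlapping prime actually lies in both chosen witness sets rather than only in $\rhostar(G)$. If this fails, the fallback is to replace one witness by a product with the Schmidt-subgroup element, using Step 1 again. For sharpness I would then evaluate the family of \cite[Example~1]{CD}, where sub-class sizes coincide with class sizes because the groups are ``almost'' minimal subnormal, and check that $|\rhostar|/\sigmastar\to 2$.
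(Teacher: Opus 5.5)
\textbf{There is a genuine gap: the core reduction is false.} Your plan rests on the claim that
\[
 \rhostar(G)\subseteq\pi(|x^{G^{*}}|)\cup\pi(|y^{G^{*}}|)
\]
for some $x,y\in G$. This fails in general, so no induction can produce the two witnesses of Step~2, and the overlap question of Step~3 never arises.

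The family of Proposition~\ref{prop:sharp} already shows this. Take $p=2$ and $n=3$. Then $A\cong C_2^3$ has seven maximal subgroups, so $|\rhostar(G_3)|=7$. For $x=na$ one has $\pi(|x^{G_3^{*}}|)=\{q_M: a\notin M\}$, which has four elements if $a\ne1$ and is empty if $a=1$.
\begin{itemize}
 \item Two witnesses with equal $A$-parts, or with one $A$-part trivial, give at most four primes.
 \item If the $A$-parts $a\ne b$ are both nontrivial, then $\langle a,b\rangle$ is itself a maximal subgroup of $A$, and $q_{\langle a,b\rangle}$ divides neither sub-class size.
\end{itemize}
So every pair of witnesses misses a prime, even though $7<8=2\sigmastar(G_3)$. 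In general, a family covering $\rhostar(G_n)$ needs $A$-parts that generate $A$, hence at least $n$ members.

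Step~2 is also self-defeating as written. Suppose a commuting $p$-element $x$ and $p'$-element $y$ had $\pi(|x^{G^{*}}|)\cup\pi(|y^{G^{*}}|)=\rhostar(G)$. Your containment for $xy$ would then give $|\rhostar(G)|\le\sigmastar(G)$. That already fails for $n=2$, where $|\rhostar(G_2)|=3>2=\sigmastar(G_2)$.

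Step~3 is, as you say, only a heuristic. A Schmidt configuration produces some prime in $\rhostar(G)$. It does not produce a prime lying in both of two prescribed witness sets.

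\textbf{The missing idea} is to replace a bounded number of witnesses by an averaging argument. The paper proves (Theorem~\ref{thm:density}) that whenever $G$ is not $p$-nilpotent, at least half of the elements of $G$ have sub-class size divisible by $p$. By Lemma~\ref{lem:pnil} this applies to every $p\in\rhostar(G)$. Double counting the pairs $(x,p)$ with $p\mid|x^{G^{*}}|$ then gives
\[
 \tfrac{|G|}{2}\,|\rhostar(G)|\le\sum_{1\ne x\in G}\bigl|\pi(|x^{G^{*}}|)\bigr|\le(|G|-1)\,\sigmastar(G).
\]
Strictness comes for free because $x=1$ contributes nothing, so no overlap argument is needed.

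All the work is in the density theorem, argued via a minimal counterexample:
\begin{itemize}
 \item Every proper quotient is $p$-nilpotent, and there is a unique minimal normal subgroup $N$.
 \item The $p$-solvable case follows from $X\setminus\Delta_p(X)=N_p(X)$, via the $p$-solvable centralizer theorem.
 \item Otherwise $N\cong S^t$. A Sylow $p$-subgroup of $G_x$ centralizing $x$, combined with the component property, forces every $x\notin\Delta_p(G)$ to normalize each factor; this yields $t=1$.
 \item The almost simple case then follows from the Frattini argument and Burnside's normal $p$-complement theorem.
\end{itemize}
Your Frobenius/Schmidt input is a local existence statement. It gives no lower bound on $|\Delta_p(G)|$, and such a positive-proportion bound is exactly what the theorem needs.
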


Casolo \cite{Casolo} proved the corresponding class-size estimate by fixing a prime $p$, counting the elements whose conjugacy class size is divisible by $p$, and then double-counting. We use the same idea for sub-classes. For a prime $p$, define
\[
 \Delta_p(G)=\{x\in G: p\mid |x^{G^{*}}|\}.
\]
For $p$-solvable groups, \cite[Lemma~3.1]{QY} gives the required estimate. We prove it for every finite group.

\begin{theorem}\label{thm:density}
Let $G$ be a finite group and let $p$ be a prime. If $G$ is not $p$-nilpotent, then
\[
 |\Delta_p(G)|\ge \frac{|G|}{2}.
\]
\end{theorem}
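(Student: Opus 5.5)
The plan is a minimal-counterexample argument. It rests on two formal properties of sub-classes that follow directly from the definition of $G_x$.

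\emph{Subnormal subgroups.} If $H$ is subnormal in $G$ and $x\in H$, then $G_x=H_x$. Indeed, a subgroup is subnormal in $H$ if and only if it is a subgroup of $H$ that is subnormal in $G$. Hence $\Delta_p(H)=\Delta_p(G)\cap H$.

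\emph{Quotients.} If $N\trianglelefteq G$, then $G_xN/N=(G/N)_{xN}$. The image of $G_x$ is subnormal and contains $xN$. Conversely, the preimage of $(G/N)_{xN}$ is subnormal and contains $x$, so it contains $G_x$. Consequently $|(xN)^{(G/N)_{xN}}|$ divides $|x^{G_x}|$. So the full preimage of $\Delta_p(G/N)$ lies in $\Delta_p(G)$, and the preimage has density $|\Delta_p(G/N)|/|G/N|$.

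Take $G$ a counterexample of minimal order. If some proper quotient $G/N$ is not $p$-nilpotent, the quotient property and induction give the bound. So every proper quotient is $p$-nilpotent. The class of $p$-nilpotent groups is a formation, so $G$ has a unique minimal normal subgroup $N$, and $N$ is not a $p'$-group. If $N$ were a $p$-group, the $p$-nilpotency of $G/N$ would make $G$ $p$-nilpotent unless $N\not\le\Phi(G)$. In that case $G=N\rtimes M$ with $M\cong G/N$ $p$-nilpotent acting faithfully and irreducibly on $N$. Otherwise $N$ is a nonabelian direct product of simple groups of order divisible by $p$.

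In both cases let $L$ be the last term of the $p$-nilpotent residual series, i.e.\ $L=O^{p',p}$-type residual. Then $L\ge N$, $L$ is normal, and $L$ is not $p$-nilpotent. The key claim is that every $x$ whose $G_x$ is not $p$-nilpotent satisfies $G_x\ge N$. This holds because a non-$p$-nilpotent subnormal subgroup has a non-$p$-nilpotent normal closure chain, and by uniqueness of $N$ this chain reaches $N$. The remaining elements, those with $G_x$ $p$-nilpotent, should lie in the $p$-nilpotent radical $R$ of $G$. I expect $R$ to be a proper normal subgroup, of index at least $2$, and I would check this carefully.

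The count is then done coset by coset modulo $N$, mimicking Casolo's argument. Fix a coset $Nx$ with $x\notin R$. For $y\in Nx$ we have $G_y\ge N$. If $p\nmid |y^{G_y}|$, then $y$ centralizes some Sylow $p$-subgroup of $G_y$, and in particular some Sylow $p$-subgroup $P_0$ of $N$. So the elements of $Nx$ outside $\Delta_p(G)$ are contained in
\[
\bigcup_{P_0\in\mathrm{Syl}_p(N)}\bigl(C_G(P_0)\cap Nx\bigr).
\]
I would show this set has at most $|N|/2$ elements. If it is nonempty, it is a union of cosets of the subgroups $C_N(P_0)$, and $C_N(P_0)$ is proper in $N$ because $N$ is not $p$-nilpotent. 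The number of Sylow subgroups is $|N:N_N(P_0)|$. The target estimate is
\[
|N:N_N(P_0)|\,|C_N(P_0)|\le |N|/2,
\]
that is, $|N_N(P_0):C_N(P_0)|\ge 2$. This inequality is exactly Burnside's/Frobenius' criterion applied to the non-$p$-nilpotent group $N$, which gives $N_N(P_0)/C_N(P_0)$ nontrivial. The elements in $R$ contribute to the complement, so one has to control $R$ separately. Here I would use that $R\cap L$ is $p$-nilpotent while $L$ is not, to make the complement's density at most $1/2$ overall.

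The main obstacle is the counting step. First, the union over Sylow subgroups overcounts, and the cosets of $C_N(P_0)$ inside $Nx$ need not all be centralizing cosets, so the estimate has to be made carefully. Second, the density contributed by $R$ must be combined with the coset counts without exceeding $1/2$. I would first try to handle the abelian case $N$ a $p$-group directly: there $G_y\ge N$ forces $p\mid |y^{G_y}|$ whenever $y\notin C_G(N)=N$-ish, which should give density at least $1-1/|M|\cdot(\dots)$. The nonabelian case would then go through the Sylow count above.
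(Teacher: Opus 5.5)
Your proposal has a genuine gap. The ``key claim'' that every $x$ with $G_x$ not $p$-nilpotent satisfies $G_x\ge N$ is false under the reductions you have made. Your justification confuses the normal closure of $G_x$, which does contain $N$, with $G_x$ itself. Two counterexamples:
\begin{itemize}
\item If $N=S_1\times\cdots\times S_t$ is nonabelian with $t\ge 2$, then every $1\ne x\in S_1$ has $G_x=S_1$ (e.g.\ $G=A_5\wr C_2$).
\item In the abelian case take $G=A_4\wr C_2$ and $p=2$. Here $N=V_4\times V_4$ is the unique minimal normal subgroup and $G/N\cong C_3\wr C_2$ is $2$-nilpotent. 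Yet for a $3$-cycle $c$ and $x=(c,1)$ one has $G_x=A_4\times 1$, which is not $2$-nilpotent and does not contain $N$.
\end{itemize}
Your coset count (``for $y\in Nx$ we have $G_y\ge N$'') and your sketch of the abelian case both rest on this claim, so neither is established. Note also that in the nonabelian case $R=1$, so the coset $N$ itself is one of your cosets and contains such elements.

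In the nonabelian case the repair is easy, and your counting then works without the obstacles you anticipate. What you actually need is that every $y\in G\setminus\Delta_p(G)$ centralizes some Sylow $p$-subgroup of $N$. Pick $Q\in\operatorname{Syl}_p(G_y)$ centralized by $y$. For each $i$, Lemma~\ref{lem:component} gives one of two cases:
\begin{itemize}
\item $S_i\le G_y$, whence $y$ centralizes $Q\cap S_i\in\operatorname{Syl}_p(S_i)$;
\item $[S_i,G_y]=1$, whence $y$ centralizes $S_i$.
\end{itemize}
The product of these Sylow subgroups is a Sylow $p$-subgroup of $N$ centralized by $y$. (The paper instead proves $t=1$ first, via the kernel of the action on components; after that $C_G(S)=1$ makes your claim true for $y\ne 1$.)

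Overcounting is harmless for an upper bound, and a nonempty $C_G(P_0)\cap Nx$ is a single coset of $C_N(P_0)$. Hence every coset satisfies $|Nx\setminus\Delta_p(G)|\le |N:N_N(P_0)|\,|C_N(P_0)|\le |N|/2$ by Burnside. No separate control of $R$ is needed, and this is a valid variant of the paper's Steps~3--4.

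The abelian case, however, lacks a genuine idea: you must analyze $H=G_y$ itself, not $N$. The paper does this for every $p$-solvable $X$, proving $X\setminus\Delta_p(X)=N_p(X)$ in two steps:
\begin{itemize}
\item A subnormal $p$-nilpotent subgroup lies in $N_p(X)$ by the normal-closure lemma.
\item Suppose $H=X_y$ is not $p$-nilpotent, and put $L=O_{p'}(H)$ and $M/L=O_p(H/L)$. Then $y\notin M$, since $y$ lies in no proper normal subgroup of $H$. The Hall--Higman lemma $C_{H/L}(M/L)\le M/L$ therefore makes $yL$ act nontrivially on the $p$-group $M/L$, forcing $p\mid |y^{H}|$.
\end{itemize}
This gives $G\setminus\Delta_p(G)=R$, a proper subgroup. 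Your sketched estimate for this case has no such support.
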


Theorem~\ref{thm:main} then follows from the counting argument in \cite[Theorem~1.7]{QY}.

\section{Preliminaries}

We need the following facts about sub-classes and components.

\begin{lemma}\label{lem:quotient}
Let $N\lhd G$ and $x\in G$. Then
\[
 |(xN)^{(G/N)^{*}}|\mid |x^{G^{*}}|.
\]
\end{lemma}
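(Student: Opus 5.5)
The plan is to identify the subnormal closure of $xN$ in $G/N$ with the image of $G_x$, and then use the elementary fact that conjugacy class sizes can only shrink, in the sense of divisibility, under a surjective homomorphism.

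\emph{Step 1: $(G/N)_{xN}=G_xN/N$.} Write $\bar G=G/N$ and $\bar x=xN$.

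First, $G_x$ is subnormal in $G$. Taking a subnormal series $G_x=H_0\lhd H_1\lhd\cdots\lhd H_r=G$ and passing to $H_iN/N$ gives a subnormal series from $G_xN/N$ to $\bar G$. This subgroup contains $\bar x$, so $\bar G_{\bar x}\le G_xN/N$.

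Conversely, let $H$ be the full preimage of $\bar G_{\bar x}$ in $G$. Preimages of a subnormal series of $\bar G$ form a subnormal series of $G$, so $H$ is subnormal in $G$. Also $x\in H$, hence $G_x\le H$. Since $N\le H$, we get $G_xN/N\le H/N=\bar G_{\bar x}$.

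Combining the two inclusions gives equality.

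\emph{Step 2: reduce to a homomorphic image.} Let $K=G_x$ and let $\varphi\colon K\to KN/N$ be the restriction of the natural projection. This map is surjective, and by Step~1 its image is exactly $\bar G_{\bar x}$. Therefore
\[
 |\bar x^{\bar G^{*}}|=|\varphi(x)^{\varphi(K)}| .
\]

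\emph{Step 3: divisibility under a surjection.} Let $\bar C=C_{\varphi(K)}(\varphi(x))$. Surjectivity of $\varphi$ gives
\[
 |\varphi(x)^{\varphi(K)}|=|\varphi(K):\bar C|=|K:\varphi^{-1}(\bar C)|.
\]
Moreover $C_K(x)\le\varphi^{-1}(\bar C)$, since anything commuting with $x$ has image commuting with $\varphi(x)$. Hence $|K:\varphi^{-1}(\bar C)|$ divides $|K:C_K(x)|=|x^{G_x}|=|x^{G^{*}}|$, which proves the lemma.

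The only step needing care is Step~1. There one must check that subnormality passes both to images and to preimages, so that the minimal subnormal subgroup containing $\bar x$ corresponds exactly to $G_xN/N$. Both directions are routine.
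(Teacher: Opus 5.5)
Your proof is correct: identifying $(G/N)_{xN}$ with $G_xN/N$ via images and preimages of subnormal series, and then noting that class sizes can only shrink, in the divisibility sense, along the surjection $G_x\to G_xN/N$, is the standard argument for this fact. The paper does not prove the lemma itself and only cites \cite[Lemma~2.1]{QY}, so your write-up is a self-contained version of that cited result.
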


\begin{proof}
This is the quotient divisibility property for sub-class sizes; see \cite[Lemma~2.1]{QY}.
\end{proof}

We use only the elementary direction of the $p$-nilpotence criterion for sub-class sizes.

\begin{lemma}\label{lem:pnil}
Let $G$ be a finite group and let $p$ be a prime. If $G$ is $p$-nilpotent, then
\[
 p\nmid |x^{G^{*}}|
 \qquad\text{for every }x\in G.
\]
Equivalently, if $p\in\rhostar(G)$, then $G$ is not $p$-nilpotent.
\end{lemma}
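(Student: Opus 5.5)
Let $G$ be $p$-nilpotent with normal $p$-complement $K=O_{p'}(G)$, and fix $x\in G$. The plan is to show that $|x^{G^{*}}|=|G_x:C_{G_x}(x)|$ is a $p'$-number. This amounts to showing that $C_{G_x}(x)$ contains a Sylow $p$-subgroup of $G_x$. Since $G_x$ is subnormal in $G$, it is itself $p$-nilpotent: subgroups of $p$-nilpotent groups are $p$-nilpotent. Its normal $p$-complement is $G_x\cap K$, and $G_x/(G_x\cap K)$ is a $p$-group.

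The key step is to locate $G_x$ inside a smaller subnormal subgroup. Set $H=\langle x\rangle K$. Since $G/K$ is a $p$-group, hence nilpotent, every subgroup of $G/K$ is subnormal. Therefore $H/K$ is subnormal in $G/K$, and so $H$ is subnormal in $G$. Because $G_x$ is the smallest subnormal subgroup containing $x$, this gives $G_x\le H=\langle x\rangle K$.

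Now write $x=x_px_{p'}$ as commuting $p$- and $p'$-parts. Then $G_x/(G_x\cap K)$ embeds in $H/K$, which is cyclic and generated by $xK=x_pK$. A Sylow $p$-subgroup $P$ of $G_x$ maps isomorphically onto $G_x/(G_x\cap K)$, so $P$ is cyclic, and its order equals the order of $x_pK$, that is, the order of $x_p$. Since $x_p$ is a power of $x$, we have $x_p\in G_x$. Thus $\langle x_p\rangle$ is a $p$-subgroup of $G_x$ of order $|P|$, and hence is a Sylow $p$-subgroup of $G_x$. Finally $\langle x_p\rangle\le C_{G_x}(x)$, so $p\nmid |G_x:C_{G_x}(x)|=|x^{G^{*}}|$.

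The main point to check is the order computation. The order of $x_pK$ in $G/K$ equals $|x_p|$ because $\langle x_p\rangle\cap K=1$, as $K$ is a $p'$-group. The same fact shows that $\langle x_p\rangle\cap(G_x\cap K)=1$, so $\langle x_p\rangle$ injects into $G_x/(G_x\cap K)$, and the two orders agree. The equivalent formulation in the statement is just the contrapositive.
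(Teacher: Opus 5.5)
Your proof is correct and rests on the same key observation as the paper: $\langle x_p\rangle O_{p'}(G)=\langle x\rangle O_{p'}(G)$ is subnormal in $G$, contains $x$, and has $\langle x_p\rangle$ as a Sylow $p$-subgroup centralizing $x$. The only difference is how you pass to $G_x$. You use the minimality inclusion $G_x\le \langle x\rangle O_{p'}(G)$ and an order count to see that $\langle x_p\rangle$ is already Sylow in $G_x$; this is in fact immediate, since $x_p\in G_x$ and $\langle x_p\rangle$ is Sylow in the larger group. The paper instead shows that $p$ does not divide the class size of $x$ in $\langle x_p\rangle O_{p'}(G)$, and then invokes invariance of the sub-class under passage to a subnormal subgroup together with divisibility of class sizes along a subnormal chain.
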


\begin{proof}
This is the implication $(3)\Rightarrow(1)$ in \cite[Theorem~1.4]{QY}; its proof is elementary. We recall it briefly. Write $x=uv=vu$, where $u$ is a $p$-element and $v$ is a $p'$-element, and let $O_{p'}(G)$ be the normal $p$-complement. Set
\[
 K=\langle u\rangle O_{p'}(G).
\]
Since $G/O_{p'}(G)$ is a $p$-group, the subgroup $K/O_{p'}(G)$ is subnormal in $G/O_{p'}(G)$; taking inverse images gives $K\lhd\lhd G$. Also $x\in K$. Since $\langle u\rangle$ is a Sylow $p$-subgroup of $K$ and $u$ commutes with $v$, we have $\langle u\rangle\le C_K(x)$. Thus
\[
 p\nmid |x^K|=[K:C_K(x)].
\]
Because $K$ is subnormal in $G$ and contains $x$, the sub-class of $x$ is unchanged when computed in $K$; see \cite[Remark~1.8]{QY}. Hence
\[
 |x^{G^{*}}|=|x^{K^{*}}|=|x^{K_x}|.
\]
Since $K_x\lhd\lhd K$, ordinary conjugacy-class sizes divide along a subnormal chain. Indeed, if $A\lhd B$ and $x\in A$, then
\[
 [A:C_A(x)]=[AC_B(x):C_B(x)]\mid [B:C_B(x)],
\]
and iteration along a subnormal chain gives
\[
 |x^{K_x}|\mid |x^K|.
\]
Consequently $p\nmid |x^{G^{*}}|$.
\end{proof}

We also need the following standard property of components. Recall that a component is a subnormal quasisimple subgroup.

\begin{lemma}\label{lem:component}
Let $L$ be a component of a finite group $G$ and let $H\lhd\lhd G$. Then either
\[
 [L,H]=1
\]
or
\[
 L\le H.
\]
\end{lemma}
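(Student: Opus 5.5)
The plan is to reduce to the case $H\lhd G$ by induction along a subnormal chain, and to treat the normal case with the three subgroups lemma together with the perfectness of $L$. Fix a subnormal chain
\[
 H=H_0\lhd H_1\lhd\cdots\lhd H_n=G
\]
and induct on $n$; the case $n=0$ is trivial since $L\le G$.

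First I settle the normal case: $L$ a component of $G$ and $N\lhd G$. I consider $[L,N]$, which is normalized by $L$ because $N\lhd G$. Since $L$ is subnormal and quasisimple, I want to show that $[L,N]$ is either contained in $Z(L)$ or equals $L$.

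Here I use the standard fact that a subnormal subgroup normalized by a quasisimple subnormal $L$ is either centralized by $L$ or contains $L$. Its proof is an induction on the defect of $L$, combined with $L=[L,L]$. The base step is $L\lhd G$. Then $[L,N]\le L\cap N$, and $L\cap N\lhd L$. A normal subgroup of the quasisimple group $L$ is either central or all of $L$. If $L\cap N=L$, then $L\le N$ and we are done. If $L\cap N\le Z(L)$, then $[L,N,L]\le [Z(L),L]=1$ and also $[N,L,L]=1$. The three subgroups lemma then gives $[L,L,N]=1$, that is, $[L,N]=1$ because $L$ is perfect.

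Next I reduce the general case to the normal case. By induction on $n$, applied to $H_1\lhd\lhd G$ with a shorter chain, either $[L,H_1]=1$ or $L\le H_1$. If $[L,H_1]=1$, then $[L,H]=1$ as well. If $L\le H_1$, then $L$ is subnormal in $H_1$, because subnormality is inherited by intermediate subgroups: intersect a chain from $L$ to $G$ with $H_1$. So $L$ is a component of $H_1$, and $H\lhd\lhd H_1$ has a chain of length $n-1$. Induction in $H_1$ finishes the argument.

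The main obstacle is the normal case when $L$ itself is not normal in $G$, since there $[L,N]\le L$ is not automatic. To handle it, I induct on the subnormal defect of $L$. Let $L\lhd M\lhd\lhd G$ with $M$ of smaller defect. Then $N\cap M\lhd M$, and the base case inside $M$ yields either $L\le N\cap M\le N$, or $[L,N\cap M]=1$.

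In the second alternative I still need $[L,N]=1$, and I plan to argue as follows. Since $N\lhd G$, we have $[L,N]\le N$, and $[L,N]\le [M,N]\le M$ because $M$ normalizes $N$ and $L\le M$. Hence $[L,N]\le N\cap M$, which commutes with $L$. So $[L,N,L]=1$ and $[N,L,L]=1$. The three subgroups lemma again gives $[L,L,N]=1$, and therefore $[L,N]=1$ by perfectness.

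One point needs care: $[L,N]\le M$ requires $M$ to normalize $N$, which holds because $N\lhd G$. With that checked, the whole lemma follows.
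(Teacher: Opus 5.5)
Your base case ($L\lhd G$: a normal subgroup of a quasisimple group is central or all of $L$, followed by the three subgroups lemma) is correct. So is your reduction from subnormal $H$ to the normal case. The gap is in the step where $L$ is not normal in $G$. There you take $M$ with $L\lhd M\lhd\lhd G$ and assert $[L,N]\le[M,N]\le M$ ``because $M$ normalizes $N$.'' The direction is wrong. Writing $[m,n]=(n^{-1})^m n$ shows that $M\le N_G(N)$ gives $[M,N]\le N$. Writing $[m,n]=m^{-1}m^n$ shows that $[M,N]\le M$ needs $N\le N_G(M)$. Since $M$ is only subnormal in $G$, nothing forces $N$ to normalize $M$. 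For example, with $M=\langle(12)(34)\rangle\lhd V_4\lhd S_4$ and $N=A_4\lhd S_4$, you get $[M,N]=V_4\not\le M$. So the inclusion $[L,N]\le N\cap M$, which your three-subgroups step needs, is not justified, and you have no independent route to it. A symptom is that your ``induction on the defect of $L$'' never uses the inductive hypothesis: everything is derived from the base case inside $M$ plus this one invalid inclusion.

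The fix is to peel off the top of the subnormal chain instead of the bottom. Write $L=L_0\lhd L_1\lhd\cdots\lhd L_r=G$ and put $M=L_{r-1}$. Then $M\lhd G$, and $L$ is a component of $M$ of defect $r-1$. Apply the inductive hypothesis in $M$ to $N\cap M\lhd M$: either $L\le N\cap M\le N$, or $[L,N\cap M]=1$. In the second case, $M$ and $N$ are both normal in $G$, so $[L,N]\le[M,N]\le M\cap N$. Hence $[L,N,L]=1=[N,L,L]$, and your three-subgroups argument gives $[L,N]=[L,L,N]=1$. For the reduction step, note that when $L\le H_1$ you simply apply the normal case in $H_1$ to $H\lhd H_1$; that chain has length $1$, not $n-1$. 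With these changes you have a self-contained proof of the fact the paper just cites from Aschbacher (31.4).
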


\begin{proof}
This is the standard component property; see \cite[31.4]{Aschbacher}.
\end{proof}

We also use the following fact.

\begin{lemma}\label{lem:pnormal-subgroups}
Every subgroup of a $p$-nilpotent finite group is $p$-nilpotent.
\end{lemma}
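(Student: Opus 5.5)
The statement to prove is Lemma~\ref{lem:pnormal-subgroups}: every subgroup of a $p$-nilpotent finite group is $p$-nilpotent. The plan is to restrict the normal $p$-complement of $G$ to the subgroup. Let $G$ be $p$-nilpotent, so $N:=O_{p'}(G)$ is a normal $p'$-subgroup with $G/N$ a $p$-group. Let $H\le G$ and set $M:=H\cap N$. I will show that $M$ is a normal $p$-complement in $H$.

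\textbf{Step 1: $M\lhd H$ and $M$ is a $p'$-group.} Since $N\lhd G$, we have $M\lhd H$. Since $M\le N$, $|M|$ divides $|N|$, so $M$ is a $p'$-group.

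\textbf{Step 2: $H/M$ is a $p$-group.} The second isomorphism theorem gives
\[
 H/M = H/(H\cap N)\cong HN/N\le G/N .
\]
A subgroup of a $p$-group is a $p$-group, so $H/M$ is a $p$-group.

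\textbf{Step 3: conclusion.} From Steps 1 and 2, $|H|=|M|\cdot|H:M|$ with $|M|$ prime to $p$ and $|H:M|$ a power of $p$. Hence $M$ is a normal subgroup of $H$ of $p'$-order and $p$-power index, which is exactly a normal $p$-complement of $H$. Therefore $H$ is $p$-nilpotent, and moreover $O_{p'}(H)=M=H\cap O_{p'}(G)$.

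There is no real obstacle. The only point to watch is the definition of $p$-nilpotence: it is the existence of a normal $p'$-subgroup of $p$-power index, so no transfer or Frobenius-type criterion is needed.
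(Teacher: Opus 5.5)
Your proof is correct and is essentially the paper's argument: you intersect the normal $p$-complement with $H$ and use $H/(H\cap N)\cong HN/N\le G/N$ to see that the quotient is a $p$-group. Your extra remark that $O_{p'}(H)=H\cap O_{p'}(G)$ is also correct, but the lemma does not need it.
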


\begin{proof}
Let $X$ be $p$-nilpotent and let $K\lhd X$ be a normal $p$-complement. If $Y\le X$, then $Y\cap K\lhd Y$, the subgroup $Y\cap K$ is a $p'$-group, and
\[
 Y/(Y\cap K)\cong YK/K
\]
is a $p$-group. Thus $Y\cap K$ is a normal $p$-complement of $Y$.
\end{proof}

\section{The density theorem}

\begin{proof}[Proof of Theorem~\ref{thm:density}]
Suppose the theorem is false, and choose a counterexample $G$ of minimal order. Thus $G$ is not $p$-nilpotent and
\begin{equation}\label{eq:counter}
 |\Delta_p(G)|<\frac{|G|}{2}.
\end{equation}

\medskip
\noindent\textbf{Step 1: every proper nontrivial quotient is $p$-nilpotent.}
Let $1\ne N\lhd G$. Suppose that $G/N$ is not $p$-nilpotent. By minimality,
\[
 |\Delta_p(G/N)|\ge \frac{|G/N|}{2}.
\]
By Lemma~\ref{lem:quotient}, if $xN\in \Delta_p(G/N)$ then every lift $x$ belongs to $\Delta_p(G)$. Hence
\[
 |\Delta_p(G)|\ge |N|\,|\Delta_p(G/N)|\ge \frac{|G|}{2},
\]
contrary to \eqref{eq:counter}. Therefore
\begin{equation}\label{eq:quotients}
 G/N\text{ is $p$-nilpotent for every }1\ne N\lhd G.
\end{equation}

\medskip
\noindent\textbf{Step 2: $G$ has a unique minimal normal subgroup.}
Suppose that $N_1$ and $N_2$ are distinct minimal normal subgroups of $G$. Then $N_1\cap N_2=1$, and the diagonal map embeds $G$ into
\[
 G/N_1\times G/N_2.
\]
By \eqref{eq:quotients}, both factors are $p$-nilpotent, hence so is their direct product. Lemma~\ref{lem:pnormal-subgroups} then implies that $G$ is $p$-nilpotent, a contradiction. Thus $G$ has a unique minimal normal subgroup $N$.

We first prove the result when the group is $p$-solvable. Let $X$ be a $p$-solvable finite group, and let $N_p(X)$ denote its unique maximal normal $p$-nilpotent subgroup. We claim that
\begin{equation}\label{eq:psolvable-exceptional}
 X\setminus\Delta_p(X)=N_p(X).
\end{equation}
Put $H=X_x$. We have
\[
 H\text{ is $p$-nilpotent}\quad\Longleftrightarrow\quad x\in N_p(X).
\]
Indeed, if $H$ is $p$-nilpotent, then the normal closure $\langle H^g:g\in X\rangle$ is $p$-nilpotent by the elementary normal-closure lemma \cite[Lemma~2.2]{QY}, so $x\in N_p(X)$. Conversely, if $x\in N_p(X)$, then $H\le N_p(X)$; hence $H$ is $p$-nilpotent by Lemma~\ref{lem:pnormal-subgroups}. In that case Lemma~\ref{lem:pnil}, applied to $H$ (for which the smallest subnormal subgroup containing $x$ is $H$ itself), gives
\[
 p\nmid [H:C_H(x)]=|x^{X^{*}}|.
\]

For the converse, suppose that $p\nmid |x^{X^{*}}|$ but $x\notin N_p(X)$. Then $H$ is not $p$-nilpotent. Set
\[
 L=O_{p'}(H),\qquad M/L=O_p(H/L).
\]
Then $M$ is the maximal normal $p$-nilpotent subgroup of $H$. Indeed, if $R\lhd H$ is $p$-nilpotent, its normal $p$-complement is necessarily the largest normal $p'$-subgroup $O_{p'}(R)$. This subgroup is characteristic in $R$, hence normal in $H$, and so $O_{p'}(R)\le L$. Thus $RL/L$ is a normal $p$-subgroup of $H/L$, whence $RL/L\le M/L$ and $R\le M$. Since $H$ itself is not $p$-nilpotent, $M<H$. Since $x$ lies in no proper normal subgroup of $H$ (otherwise the minimality of $H=X_x$ would be contradicted), we have $x\notin M$. We use the standard $p$-solvable centralizer theorem: for every finite $p$-solvable group $Y$,
\[
 C_{Y/O_{p'}(Y)}\bigl(O_p(Y/O_{p'}(Y))\bigr)
 \le O_p(Y/O_{p'}(Y));
\]
see \cite[6.4.3]{KS}. Applied to $Y=H$, this gives
\[
 C_{H/L}(M/L)\le M/L.
\]
Hence $xL$ does not centralize $M/L$. Therefore
\[
 p\mid [M/L:C_{M/L}(xL)],
\]
and this index divides $[H/L:C_{H/L}(xL)]$ by normal-subgroup class-size divisibility; the latter index divides $[H:C_H(x)]$ by quotient class-size divisibility. Consequently
\[
 p\mid [H:C_H(x)]=|x^{X^{*}}|,
\]
a contradiction. This proves \eqref{eq:psolvable-exceptional}; compare \cite[Lemma~3.1]{QY}.

If $X$ is not $p$-nilpotent, then $N_p(X)<X$, so $|N_p(X)|\le |X|/2$. By \eqref{eq:psolvable-exceptional},
\[
 |\Delta_p(X)|=|X|-|N_p(X)|\ge |X|/2.
\]
Therefore the present counterexample is not $p$-solvable.

By \eqref{eq:quotients}, $G/N$ is $p$-nilpotent and therefore $p$-solvable. If $N$ were abelian, then $N$ would also be $p$-solvable, and closure of $p$-solvability under extensions would make $G$ $p$-solvable, a contradiction. Hence $N$ is nonabelian. Consequently
\[
 N=S_1\times\cdots\times S_t\cong S^t
\]
for a nonabelian simple group $S$. Moreover, $p\mid |S|$; otherwise $N$ would be a $p'$-group, hence $p$-solvable, and again extension-closure would make $G$ $p$-solvable.

We also have
\begin{equation}\label{eq:centralizerN}
 C_G(N)=1.
\end{equation}
Indeed, $C_G(N)\lhd G$ and
\[
 C_G(N)\cap N=Z(N)=1.
\]
If $C_G(N)\ne1$, then the uniqueness of the minimal normal subgroup forces $N\le C_G(N)$, contradicting $C_G(N)\cap N=1$.

\medskip
\noindent\textbf{Step 3: an element outside $\Delta_p(G)$ fixes every component of $N$.}
Let
\[
 E_p(G)=G\setminus \Delta_p(G),
\]
and take $x\in E_p(G)$. Put $H=G_x$. Since
\[
 p\nmid |x^{G^{*}}|=[H:C_H(x)],
\]
there exists $Q\in\operatorname{Syl}_p(H)$ such that
\begin{equation}\label{eq:Qcentralizes}
 Q\le C_H(x).
\end{equation}

Suppose that $x$ moves a component $S_i$ of $N$, say
\[
 S_i^x=S_j\ne S_i.
\]
Since $x\in H$, we have $[S_i,H]\ne1$. The subgroup $S_i$ is a component of $G$, so Lemma~\ref{lem:component} gives $S_i\le H$. Hence also $S_j=S_i^x\le H$.

Set $M=N\cap H$. Since $N\lhd G$, we have $M\lhd H$. Therefore
\[
 Q\cap M\in\operatorname{Syl}_p(M).
\]
Since $S_i\le M$ and $S_i\lhd M$,
\[
 Q\cap S_i\in\operatorname{Syl}_p(S_i).
\]
As $p\mid |S_i|$, choose $1\ne u\in Q\cap S_i$. By \eqref{eq:Qcentralizes}, $u^x=u$. On the other hand,
\[
 u^x\in S_i^x=S_j,
\]
so
\[
 u\in S_i\cap S_j=1,
\]
a contradiction. Therefore every element of $E_p(G)$ fixes each component $S_i$ setwise.

Let
\[
 K=\ker\bigl(G\to\operatorname{Sym}\{S_1,\ldots,S_t\}\bigr).
\]
Then
\begin{equation}\label{eq:EinsideK}
 E_p(G)\subseteq K.
\end{equation}
The product of the components in any $G$-orbit is normal in $G$; since $N$ is minimal normal, there is only one orbit. Thus $G$ acts transitively on the set of components of $N$. If $t>1$, then $|G:K|\ge2$, and \eqref{eq:EinsideK} gives
\[
 |E_p(G)|\le |K|\le \frac{|G|}{2}.
\]
Thus $|\Delta_p(G)|\ge |G|/2$, contrary to \eqref{eq:counter}. Hence
\[
 t=1.
\]

Thus $N=S$ is nonabelian simple. By \eqref{eq:centralizerN}, conjugation embeds $G$ into $\operatorname{Aut}(S)$, so
\[
 S\le G\le\operatorname{Aut}(S).
\]

\medskip
\noindent\textbf{Step 4: the almost simple case.}
Let $x\in E_p(G)$. If $x=1$, then $x$ centralizes every Sylow $p$-subgroup of $S$. Assume $x\ne1$, and again put $H=G_x$. Since $C_G(S)=1$, we have $[S,x]\ne1$, and therefore $[S,H]\ne1$. By Lemma~\ref{lem:component},
\[
 S\le H.
\]
Choose $Q\in\operatorname{Syl}_p(H)$ with $Q\le C_H(x)$. Since $S\lhd H$,
\[
 P:=Q\cap S\in\operatorname{Syl}_p(S),
\]
and $x\in C_G(P)$. Consequently
\begin{equation}\label{eq:Eunion}
 E_p(G)\subseteq \bigcup_{P\in\operatorname{Syl}_p(S)} C_G(P).
\end{equation}

Fix $P\in\operatorname{Syl}_p(S)$. Since $S\lhd G$, the Frattini argument gives
\[
 G=S N_G(P),
\]
and hence
\begin{equation}\label{eq:frattini-index}
 |G:N_G(P)|=|S:N_S(P)|=|\operatorname{Syl}_p(S)|.
\end{equation}
All Sylow $p$-subgroups of $S$ are conjugate in $S$, so their centralizers in $G$ have the same order. From \eqref{eq:Eunion} and \eqref{eq:frattini-index},
\begin{align}
 |E_p(G)|
 &\le |\operatorname{Syl}_p(S)|\,|C_G(P)|  
 \label{eq:unionbound}\\ \notag
 &= |G:N_G(P)|\,|C_G(P)| \\ \notag
 &= \frac{|G|}{|N_G(P):C_G(P)|}.
\end{align}

Now
\[
 N_S(P)/C_S(P)\hookrightarrow N_G(P)/C_G(P).
\]
We claim that $N_S(P)\ne C_S(P)$. Otherwise $N_S(P)=C_S(P)$, so every element of $N_S(P)$ centralizes $P$; in particular $P\le Z(N_S(P))$. Burnside's normal $p$-complement theorem would then imply that $S$ has a normal $p$-complement. This is impossible because $S$ is nonabelian simple and $p\mid |S|$. Thus
\[
 |N_G(P):C_G(P)|\ge2.
\]
Equation \eqref{eq:unionbound} yields 
\[
 |E_p(G)|\le \frac{|G|}{2},
\]
so $|\Delta_p(G)|\ge |G|/2$, again contradicting \eqref{eq:counter}. This completes the proof.
\end{proof}

\section{The $\rho^*$--$\sigma^*$ bound}

\begin{proof}[Proof of Theorem~\ref{thm:main}]
By Lemma~\ref{lem:pnil}, if $p\in\rhostar(G)$ then $G$ is not $p$-nilpotent. Hence Theorem~\ref{thm:density} gives
\[
 |\Delta_p(G)|\ge \frac{|G|}{2}
 \qquad\text{for every }p\in\rhostar(G).
\]
Since $G$ is nonnilpotent, not all sub-class sizes are $1$; equivalently, $\sigmastar(G)>0$ by \cite[Corollary~2.3]{QY}. Therefore
\begin{align*}
 \frac{|G|}{2}|\rhostar(G)|
 &\le \sum_{p\in\rhostar(G)}|\Delta_p(G)| \\
 &= \sum_{1\ne x\in G}\bigl|\pi(|x^{G^{*}}|)\bigr| \\
 &\le (|G|-1)\sigmastar(G) \\
 &< |G|\sigmastar(G).
\end{align*}
Dividing by $|G|/2$ gives
\[
 |\rhostar(G)|<2\sigmastar(G),
\]
as required.
\end{proof}

\section{Sharpness}

The coefficient $2$ in Theorem~\ref{thm:main} cannot be replaced by a smaller universal constant. We use the family from \cite[Example~1]{CD}.

\begin{proposition}\label{prop:sharp}
There is a family of finite supersolvable metabelian groups $G_n$ such that
\[
 \frac{|\rhostar(G_n)|}{\sigmastar(G_n)}\longrightarrow 2.
\]
\end{proposition}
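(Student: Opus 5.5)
The plan is to take the family $G_n$ of \cite[Example~1]{CD} and compute its sub-class sizes directly. Two facts frame the computation. For every $n$, Theorem~\ref{thm:main} already gives $|\rhostar(G_n)|/\sigmastar(G_n)<2$, so only a lower bound on the ratio that tends to $2$ is needed. Since $G_n$ is supersolvable metabelian, everything reduces to linear actions of an abelian group on a sum of one-dimensional modules. I have not reconstructed the details of \cite[Example~1]{CD}, so the key steps below are a plan to be verified against that construction.

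\textbf{Recalling the structure.} I expect $G_n=V\rtimes A$, with the following data:
\begin{itemize}[leftmargin=*]
\item $V$ is abelian and a direct sum of cyclic modules of prime orders $p_1,\dots,p_m$;
\item $A$ is abelian of squarefree order $q_1\cdots q_k$;
\item each $q_j$ acts faithfully on a prescribed set of the $p_i$, with $q_j\mid p_i-1$;
\item the incidence pattern is arranged so that the primes $q_j$ also occur in sub-class sizes.
\end{itemize}
The reason for the last requirement is a caution. A plain direct product of Frobenius groups $C_{p_i}\rtimes C_{q_i}$ does \emph{not} work. There, for $x$ in the $p$-part the subnormal closure $G_x$ is just $C_{p_i}$, so the sub-class size is $1$. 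This yields only $\rhostar=\{p_i\}$ with ratio $1$. So the first step is to identify exactly which feature of \cite[Example~1]{CD} forces $q$-primes into sub-class sizes. I expect this to be that $V$ (or the relevant normal subgroup) is not cyclic, so that $\langle x\rangle$ need not be subnormal and $G_x$ can contain elements of $A$.

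\textbf{Computing $G_x$ for each $x$.} For $x=va$ with $v\in V$ and $a\in A$:
\begin{itemize}[leftmargin=*]
\item $G_x$ contains $[V,a]$, so $|x^{G^*}|$ is divisible by the product of the $p_i$ on which $a$ acts nontrivially.
\item For $a=1$, one needs the smallest subnormal subgroup containing $v$. In $V\rtimes A$ this is $\langle v\rangle$ extended by whatever part of $A$ is forced in by non-invariance of $\langle v\rangle$ under the relevant subgroups. The resulting $q$-contribution to $|v^{G^*}|$ is the index of the centralizer.
\end{itemize}
I will compute $|x^{G^*}|=[G_x:C_{G_x}(x)]$ by splitting into the $V$-part and the $A$-part. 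Lemma~\ref{lem:quotient} is useful for lower bounds, applied to quotients by sums of isotypic components of $V$.

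\textbf{Counting.} With these formulas I will show two bounds:
\begin{itemize}[leftmargin=*]
\item $\rhostar(G_n)$ contains essentially all of the $p_i$ and $q_j$, about $2n$ primes;
\item every single sub-class size involves at most about $n+O(1)$ primes, because an element cannot simultaneously pick up many $p$-primes (requiring a nontrivial $A$-part) and many $q$-primes (requiring a large $V$-part that $A$ does not centralize).
\end{itemize}
This gives a lower bound of $2-O(1/n)$ for the ratio, and combined with Theorem~\ref{thm:main} the ratio tends to $2$.

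\textbf{Main obstacle.} The hardest step is the upper bound on $\sigmastar(G_n)$. It requires a uniform description of $G_x$ for mixed elements $x=va$ and a proof that the $p$-primes and $q$-primes in $|x^{G^*}|$ trade off against each other. I would first try to show that if $a\ne1$ then $G_x\ge V$ and the $q$-part of $|x^{G^*}|$ comes only from $[A, G_x\cap A]$-type conjugation, and then bound the total by the combinatorics of the incidence pattern in \cite[Example~1]{CD}.
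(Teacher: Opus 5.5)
Your proposal contains no actual construction, and the structure you guess for the example in \cite[Example~1]{CD} cannot give a ratio near $2$. First, primes of the acting group never lie in $\rhostar$. If $G=V\rtimes A$ with $V$ and $A$ abelian and $q\mid |A|$, $q\nmid |V|$, then $VA_{q'}$ is normal, because it contains $V$ and $G/V$ is abelian. It is also a normal $q$-complement, so $q\notin\rhostar(G)$ by Lemma~\ref{lem:pnil}. The mechanism you propose for forcing such primes in fails as well: for $v\in V$ the subgroup $\langle v\rangle\lhd V\lhd G$ is already subnormal, so $G_v=\langle v\rangle$ and $|v^{G^{*}}|=1$. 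So the count ``about $2n$ primes in $\rhostar$, about $n$ per element'' has no basis, and the trade-off between $p$-primes and $q$-primes that you call the main obstacle is not the real issue.

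Second, an $A$ of squarefree order is cyclic, and then the ratio is exactly $1$. Choose $g\in A$ generating $G/V$ and take any $r\in\rhostar(G)$. The maximal normal $r$-nilpotent subgroup $N_r(G)$ contains the abelian normal subgroup $V$, and it is proper because $G$ is not $r$-nilpotent. Hence $N_r(G)/V$ is a proper subgroup of the cyclic group $G/V$ and misses $gV$. By \eqref{eq:psolvable-exceptional} this gives $r\mid |g^{G^{*}}|$, so $\sigmastar(G)=|\rhostar(G)|$.

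The missing idea is that the ratio comes from the combinatorics of kernels of a \emph{non-cyclic} acting group, and all primes of $\rhostar$ come from the normal subgroup. The paper's construction is as follows.
\begin{itemize}
\item Take $A$ elementary abelian of rank $n$. For each maximal subgroup $M$ of $A$, take a cyclic group $C_{q_M}$ of prime order, with distinct primes $q_M\equiv 1\pmod p$, on which $A$ acts with kernel $M$.
\item $N$ is a normal $p$-complement, so $p\notin\rhostar(G_n)$.
\item One checks that $N_{q_M}(G_n)=NM$. Then \eqref{eq:psolvable-exceptional} gives, for $x=na$, that $q_M\mid |x^{G_n^{*}}|$ if and only if $a\notin M$.
\item Hence $|\rhostar(G_n)|=(p^n-1)/(p-1)$. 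A nonidentity $a$ lies outside exactly $p^{n-1}$ maximal subgroups, so $\sigmastar(G_n)=p^{n-1}$.
\item The ratio therefore tends to $p/(p-1)$, which equals $2$ for $p=2$.
\end{itemize}
Both quantities are computed exactly, so no appeal to Theorem~\ref{thm:main} for an upper bound is needed.
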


\begin{proof}
Fix a prime $p$. Let $A$ be an elementary abelian $p$-group of rank $n$, and let $\mathcal M$ be the set of maximal subgroups of $A$. Thus
\[
 |\mathcal M|=\frac{p^n-1}{p-1}.
\]
For each $M\in\mathcal M$, choose distinct primes $q_M\equiv1\pmod p$; this is possible since there are infinitely many primes congruent to $1$ modulo $p$ (for example, by Dirichlet's theorem). Let $C_M$ be cyclic of order $q_M$, and let $A$ act nontrivially on $C_M$ with kernel $M$. Put
\[
 N=\prod_{M\in\mathcal M}C_M,
 \qquad G_n=N\rtimes A.
\]
These are the groups of \cite[Example~1]{CD}. Since both $N$ and $A$ are abelian, $G_n$ is metabelian; The normal factors $C_M$ and a cyclic series for $A$ give a normal series with cyclic factors, so $G_n$ is supersolvable.

Since $N$ is a normal $p$-complement of $G_n$, Lemma~\ref{lem:pnil} shows that $p\notin\rhostar(G_n)$. Fix $M\in\mathcal M$ and set $q=q_M$. We claim that
\[
 N_q(G_n)=NM,
\]
where $N_q(G_n)$ denotes the maximal normal $q$-nilpotent subgroup of $G_n$. Indeed, $NM\lhd G_n$, and $M$ centralizes $C_M$; hence
\[
 NM=C_M\times\left(\Bigl(\prod_{L\ne M}C_L\Bigr)\rtimes M\right).
\]
Thus $\left(\prod_{L\ne M}C_L\right)\rtimes M$ is a normal $q$-complement of $NM$, so $NM$ is $q$-nilpotent. Conversely, let $R\lhd G_n$ be $q$-nilpotent. If the image of $R$ in $A$ is not contained in $M$, choose $r\in R$ whose image lies outside $M$. Then $r$ acts nontrivially on $C_M$. Since $C_M$ has prime order, every nontrivial automorphism of $C_M$ has trivial fixed-point subgroup, and hence
\[
 [C_M,r]=C_M.
\]
Moreover $R\lhd G_n$ and $r\in R$, so $[C_M,r]\le R$. Thus $C_M\le R$.
Since $C_M$ is the Sylow $q$-subgroup of $R$, if $K$ is the normal $q$-complement of $R$, then $R=C_MK$ and $[C_M,K]=1$. Hence $C_M\le Z(R)$, contradicting the choice of $r$. Thus $R\le NM$, proving the claim.

Write $x=na$ with $n\in N$ and $a\in A$. By \eqref{eq:psolvable-exceptional}, applied with the prime $q_M$,
\[
 q_M\mid |x^{G_n^*}|
 \quad\Longleftrightarrow\quad
 x\notin NM
 \quad\Longleftrightarrow\quad
 a\notin M.
\]
Consequently
\[
 \pi(|x^{G_n^*}|)=\{q_M:M\in\mathcal M,\ a\notin M\}.
\]
Indeed, the only prime divisors of $|G_n|$ are $p$ and the primes $q_M$, while $p\notin\rhostar(G_n)$ by the normal $p$-complement $N$. Every $q_M$ occurs, and hence
\[
 |\rhostar(G_n)|=|\mathcal M|=\frac{p^n-1}{p-1}.
\]
If $a\ne1$, the maximal subgroups of $A$ containing $a$ are precisely the inverse images of the maximal subgroups of $A/\langle a\rangle$, so their number is
\[
 \frac{p^{n-1}-1}{p-1}.
\]
Thus exactly $p^{n-1}$ maximal subgroups of $A$ fail to contain $a$, and therefore
\[
 \sigmastar(G_n)=p^{n-1}.
\]
It follows that
\[
 \frac{|\rhostar(G_n)|}{\sigmastar(G_n)}
 =\frac{p^n-1}{(p-1)p^{n-1}}
 \longrightarrow \frac{p}{p-1}.
\]
Taking $p=2$ gives the required limit $2$.
\end{proof}

\section*{Acknowledgements}
This work was partially supported by a grant from the Simons Foundation (\#918096, to YY).

\section*{Disclosure Statement}
The authors declare that they have no competing interests and no conflicts of interest.

\section*{Data Availability Statement}
Data sharing is not applicable to this article, as no data sets were generated or analysed during the current study.

\end{document}